\newtheorem{theorem}{Theorem}[section]
\newtheorem{example}[theorem]{Example}
\newtheorem{lemma}[theorem]{Lemma}
\newtheorem{definition}[theorem]{Definition}
\numberwithin{equation}{section}
\begin{document}
\title{On very non-linear subsets of continuous functions}
\author[G. Botelho, D. Cariello, V.V. F\'{a}varo, D. Pellegrino and J.B. Seoane]{G. Botelho, D. Cariello, V.V. F\'{a}varo, D. Pellegrino and J.B. Seoane-Sep\'{u}lveda}

\address{Faculdade de Matem\'atica, \newline\indent Universidade Federal de Uberl\^{a}ndia, \newline\indent 38.400-902 Ð Uberl\^{a}ndia, Brazil.}\email{botelho@ufu.br, dcariello@famat.ufu.br, vvfavaro@gmail.com}

\address{Departamento de Matem\'{a}tica, \newline\indent Universidade Federal da Para\'{\i}ba, \newline\indent 58.051-900 - Jo\~{a}o Pessoa, Brazil.}\email{pellegrino@pq.cnpq.br}

\address{Departamento de An\'{a}lisis Matem\'{a}tico,\newline\indent Facultad de Ciencias Matem\'{a}ticas, \newline\indent Plaza de Ciencias 3, \newline\indent Universidad Complutense de Madrid,\newline\indent Madrid, 28040, Spain.}
\email{jseoane@mat.ucm.es}

\thanks{G. Botelho is supported by CNPq Grant 302177/2011-6 and Fapemig Grant PPM-00295-11; J.B. Seoane-Sep\'{u}lveda was supported by MTM2012-34341.}

\keywords{lineability, continuous function, very non-linear set.}

\subjclass[2010]{15A03, 26A15.}

\maketitle

\begin{abstract}
In this paper we continue the study initiated by Gurariy and Quarta in 2004 on the existence of linear spaces formed, up to the null vector, by continuous functions that attain the maximum only at one point. Inserting a topological flavor to the subject, we prove that results already known for functions defined on certain subsets of $\mathbb{R}$ are actually true for functions on quite general topological spaces. In the line of the original results of Gurariy and Quarta, we prove that, depending on the desired dimension, such subspaces may exist or not.
\end{abstract}

\section*{Introduction}

The problem of finding linear spaces formed, up to the origin, solely by real-valued continuous functions on certain subsets of $\mathbb{R}$ that attain the maximum only at one point was successfully investigated by
V.I. Gurariy and L. Quarta \cite{GQ}. Given a topological space $D$, by $\widehat{C}(D)$ we denote the subset of the linear space $C\left(  D\right)  $ of all real-valued continuous functions on $D$ composed by the functions that attain the maximum exactly once in $D$. The main results obtained by Gurariy and Quarta in this direction are the following:

\begin{enumerate}
\item[(A)] $\widehat{C}[a,b)$ contains, up to the origin, a $2$-dimensional linear subspace of $C[a,b)$.

\item[(B)] $\widehat{C}(\mathbb{R})$ contains, up to the origin, a $2$-dimensional linear subspace of $C(\mathbb{R})$.

\item[(C)] There is no $2$-dimensional linear subspace of $C\left[ a,b\right]  $ contained in $\widehat{C}\left[a,b\right]\cup \{0\}$.
\end{enumerate}

In the words of Gurariy and Quarta (see, e.g., \cite{GQ}): $\widehat{C}[a,b)$ and $\widehat{C}(\mathbb{R})$ are $2$-lineable and $\widehat{C}\left[a,b\right]$ is very non-linear (we refer the interested reader to \cite{AGS,GQ} for an account on  this recently coined concept of {\em lineability}).

The situation is interesting because, for example, although $\widehat{C}\left[  a,b\right]  $ is a dense $G_\delta$ subset of $C[a,b]$, it does not contain, up to the origin, a 2-dimensional subspace of $C[a,b]$. The proof, due to Gurariy (2004), of the fact that $\widehat{C}\left[  a,b\right]  $ is a dense $G_\delta$ set has never appeared, so we shall sketch it in the Appendix for the benefit of the reader (and as a tribute to V.I. Gurariy).

On the one hand, the results (A) and (B) above can be obtained in a fairly simple way. Indeed, for (A) just take $\sin \cdot $ and $\cos \cdot$ on $[0, 2\pi)$, whereas for (B) consider the two linearly independent functions $x(t), y(t)$ defined on $\mathbb{R}$ as
$$x(t) := \mu(t) \cos(4 \arctan(|t|))  \quad \text{and} \quad y(t) := \mu(t) \sin(4 \arctan(|t|)),$$
where $\mu$ is the real valued continuous function defined on $\mathbb{R}$ by
\begin{equation*}
\mu(t)=\left\{
\begin{tabular}{cl}
$e^t$ & if $t \le 0$,\\
$1$ & if $t \ge 0$.
\end{tabular}
\right.
\end{equation*}
Then, take the $2$-dimensional vector space given by $\displaystyle V = \text{span} \{x(t), y(t) \}.$ It can be seen, quite easily, that $V \subsetneq \widehat{\mathcal{C}}(\mathbb R) \cup \{0\}.$ On the other hand, result (C) above requires a series of highly technical lemmas (see \cite{GQ})

The purpose of this paper is to obtain far-reaching generalizations of the aforementioned results of Gurariy and Quarta. The idea is to consider spaces of functions defined on domains much more general than the original ones ($[a,b), \mathbb{R}, [a,b]$, respectively), for which the corresponding results hold true. Moreover, we investigate the existence of $n$-dimensional subspaces -- instead of 2-dimensional subspaces -- formed by functions that attain the maximum exactly once. The search for such general domains disclosed the topological nature of the problem. In this way, for each of the three results we ended up with general domains that replicate a topological property of the original domain. While Gurariy and Quarta \cite{GQ} used typical analytic techniques, the manifested nature of the problem led us to apply topological techniques, for example the Borsuk-Ulam theorem.

This paper is arranged as follows. In Section \ref{Sec1} we extend (A) to spaces of functions defined on topological spaces $D$ that can be continuously embedded onto some Euclidean sphere $S^n$. In Section \ref{Sec2} we extend (B) to spaces of functions defined on quite general topological spaces $D$ that include $\mathbb{R}$. In the two former cases we prove that $\widehat{C}(D)\cup \{0\}$ contains an $(n+1)$-dimensional subspace. In Section \ref{Sec3} we extend (B) to spaces of functions defined on compact subsets $K$ of $\mathbb{R}^m$. In this case we prove that $\widehat{C}(K) \cup \{0\}$ does not contain an $(m+1)$-dimensional subspace of $C(K)$ for every compact $K \subset \mathbb{R}^m$ but, on the other hand, there are compact sets $K \subset \mathbb{R}^m$ for which $\widehat{C}(K) \cup \{0\}$ contains an $m$-dimensional subspace of $C(K)$. In a final section we provide an example of a compact space $K$ for which there is an infinite dimensional subspace of $C(K)$ formed by functions that attain the maximum only at one point.

\section{Continuous functions on preimages of Euclidean spheres}
\label{Sec1}
In this section we show that in (A) the interval $[a,b)$ can be replaced by preimages $D$ of Euclidean spheres. Moreover, the dimension of the resulting subspace of $C(D)$ contained in $\widehat{C}(D)\cup \{0\}$ equals the dimension of the sphere plus 1. By $\langle \cdot, \cdot \rangle$ we denote the usual inner product in the Euclidean spaces.
\begin{theorem}
\label{aaa}Let $n\geq2$ be a positive integer and $D$ be a topological space
for which there is a continuous bijection from $D$ to $S^{n-1}.$ Then $\widehat{C}\left(  D\right)  $ contains, up to the origin, an $n$-dimensional linear subspace of $C(D)$.
\end{theorem}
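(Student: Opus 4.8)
The plan is to produce the required $n$-dimensional subspace explicitly, built from the linear functionals on the Euclidean sphere and then pulled back to $D$. Regard $S^{n-1}$ as the unit sphere of $\mathbb{R}^{n}$, and for $v\in\mathbb{R}^{n}$ let $f_{v}\colon S^{n-1}\to\mathbb{R}$ be the continuous function $f_{v}(x)=\langle v,x\rangle$. By the Cauchy--Schwarz inequality, if $v\neq 0$ then $f_{v}(x)\leq\|v\|$ for every $x\in S^{n-1}$, with equality precisely when $x=v/\|v\|$; so $f_{v}$ attains its maximum on $S^{n-1}$ at exactly one point. Writing $\pi_{1},\dots,\pi_{n}$ for the coordinate functions (that is, $\pi_{i}=f_{e_{i}}$, where $e_{1},\dots,e_{n}$ is the canonical basis), every nonzero element of $\mathrm{span}\{\pi_{1},\dots,\pi_{n}\}$ is of the form $f_{v}$ with $v\neq 0$, and hence lies in $\widehat{C}(S^{n-1})$. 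Moreover $\pi_{1},\dots,\pi_{n}$ are linearly independent on $S^{n-1}$, since a relation $\sum_{i}a_{i}\pi_{i}=0$ means $\langle a,x\rangle=0$ for all $x\in S^{n-1}$, and $S^{n-1}$ contains $e_{1},\dots,e_{n}$, forcing $a=0$. Thus $\mathrm{span}\{\pi_{1},\dots,\pi_{n}\}$ is an $n$-dimensional subspace of $C(S^{n-1})$ contained in $\widehat{C}(S^{n-1})\cup\{0\}$.

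Next I would transport this to $D$. Let $\varphi\colon D\to S^{n-1}$ be the given continuous bijection and set $g_{i}:=\pi_{i}\circ\varphi\in C(D)$ for $i=1,\dots,n$. Surjectivity of $\varphi$ shows that $g_{1},\dots,g_{n}$ remain linearly independent: $\sum_{i}a_{i}g_{i}=0$ implies $\sum_{i}a_{i}\pi_{i}$ vanishes on $\varphi(D)=S^{n-1}$, hence $a=0$. So $W:=\mathrm{span}\{g_{1},\dots,g_{n}\}$ is $n$-dimensional. Now fix $a=(a_{1},\dots,a_{n})\neq 0$; then $\sum_{i}a_{i}g_{i}=f_{a}\circ\varphi$, so $\bigl(\sum_{i}a_{i}g_{i}\bigr)(p)\leq\|a\|$ for all $p\in D$, with equality exactly when $\varphi(p)=a/\|a\|$. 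Since $\varphi$ is injective, there is exactly one such $p$, so $\sum_{i}a_{i}g_{i}\in\widehat{C}(D)$. Therefore $W\subseteq\widehat{C}(D)\cup\{0\}$, which is the assertion.

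I do not expect a genuine obstacle here; the argument is short once one sees that linear functionals on the sphere already have the ``unique maximum'' property and that it is preserved under any injection. The point worth stressing is exactly that only a continuous \emph{bijection} is assumed: continuity of $\varphi^{-1}$ is never invoked, injectivity of $\varphi$ being used only to get uniqueness of the maximizer on $D$ and surjectivity only to preserve linear independence. (The hypothesis $n\geq 2$ is not essential to this reasoning; it is kept to place the result in the regime of the $2$-dimensional statement (A) that is being generalized.)
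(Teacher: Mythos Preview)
Your argument is correct and essentially identical to the paper's own proof: both take the coordinate projections $\pi_i$ on $S^{n-1}$, observe via the inner product (Cauchy--Schwarz) that every nonzero linear combination $\sum a_i\pi_i=\langle a,\cdot\rangle$ has $a/\|a\|$ as its sole maximizer, and then pull back through the continuous bijection, using surjectivity for linear independence and injectivity for uniqueness of the maximum. The only cosmetic differences are that you test independence at the $e_i$'s while the paper tests it at $a/\|a\|$, and you name Cauchy--Schwarz explicitly where the paper says the fact is well known.
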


\begin{proof}Let $\pi_{i}\colon S^{n-1}\longrightarrow\mathbb{R}$ be the projection on the component
$i=1,\ldots,n$ and let $G\colon D\longrightarrow S^{n-1}$ be a continuous bijection. We
first note that the maps $\pi_{i}$, $1\leq i\leq
n$, are linearly independent. In fact, consider a linear combination
$\sum_{i=1}^{n}a_{i}\pi_{i}$. If $a_{i}\neq0$ for some $i$, then $\left(\sum_{i=1}^{n}a_{i}^{2}\right)^{1/2}\neq0$. Let
$z=(a_{1},\ldots,a_{n})/\left(\sum_{i=1}^{n}a_{i}^{2}\right)^{1/2}\in S^{n-1}$. Then
\[
\sum_{i=1}^{n}a_{i}\pi_{i}(z)=\dfrac{\sum_{i=1}^{n}a_{i}^{2}}{\left(\sum
_{i=1}^{n}a_{i}^{2}\right)^{1/2}}\neq0.
\]
Now we shall prove that each nontrivial linear combination of the functions $\pi_{i}$,
$1\leq i\leq n$, has only one point of maximum. Consider $\sum_{i=1}^{n}a_{i}\pi_{i}$ with some $a_{i}\neq0$. For an arbitrary $y\in S^{n-1}$,
\[
\sum_{i=1}^{n}a_{i}\pi_{i}(y)= \langle a,y \rangle,
\]
where $a=(a_{1},\ldots,a_{n})$. It is well known that the function $y \mapsto \langle a,y \rangle$ attains its maximum in $z\in
S^{n-1}$ if, and only if,
\begin{equation}
z=\frac{(a_{1},\ldots,a_{n})}{\left(\sum_{i=1}^{n}a_{i}^{2}\right)^{1/2}}.\label{dez111}%
\end{equation}
Therefore the unique point of maximum of $\sum_{i=1}^{n}a_{i}\pi
_{i}\colon S^{n-1}\longrightarrow\mathbb{R}$ is $z$ as in (\ref{dez111}).

Now, consider the compositions%
\[
\pi_{i}\circ G\colon D\longrightarrow\mathbb{R}%
\]
for $i=1,\ldots,n.$ It is easy to see that set $\left\{  \pi_{i}\circ
G:i=1,\ldots,n\right\}  $ is linearly independent. Let $
h=\sum_{i=1}^{n}b_{i}\left(  \pi_{i}\circ G\right)$ be a nontrivial linear combination of the functions $\pi_{i}\circ G.$ Since $\sum_{i=1}^{n}b_{i}\pi_{i}$
attains its maximum at an unique point $x_{0}\in S^{n-1}$, and $G$ is a
bijection, it follows that $h$ attains its maximum at an unique point, namely,
$G^{-1}\left(  x_{0}\right)  .$ The linear subspace spanned by the functions $  \pi_{i}\circ G, i= 1, \ldots, n,$
completes the proof.
\end{proof}

\section{Generalizing the 2-lineability of $\widehat{C}(\mathbb{R})$}\label{Sec2}

In this section we show that the argument of the proof of Theorem \ref{aaa} actually holds for more general domains; general enough to have $\mathbb{R}$ as a particular instance. By $\|\cdot\|_2$ we mean the Euclidean norm on $\mathbb{R}^n$.

\begin{theorem}
\label{bbbb}Let $n\geq2$ be a positive integer and $D$ be a topological space
containing a closed set $Y$ such that there are a continuous bijection $F \colon Y \longrightarrow S^{n-1}$ and a continuous extension $G \colon D \longrightarrow \mathbb{R}^n$ of $F$ such that $\|G(x)\|_2 < 1$ for every $x \notin Y$. Then $\widehat{C}\left(  D\right)  $ contains, up to the origin, an $n$-dimensional linear subspace of $C(D)$.
\end{theorem}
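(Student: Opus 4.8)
The plan is to run the same argument as in \thmref{aaa}, now using the coordinate projections composed with the \emph{global} extension $G$ rather than with a homeomorphism onto the sphere, and to absorb the behaviour on the ``extra'' part of the domain $D\setminus Y$ by means of the Cauchy--Schwarz inequality together with the hypothesis $\|G(x)\|_2<1$ there.

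Concretely, let $\pi_i\colon\mathbb{R}^n\to\mathbb{R}$ be the $i$-th coordinate projection and set $f_i:=\pi_i\circ G\in C(D)$ for $i=1,\dots,n$. The first step is to check that $f_1,\dots,f_n$ are linearly independent: if $\sum_{i=1}^n b_i f_i\equiv 0$ on $D$, then writing $b=(b_1,\dots,b_n)$ we get $\langle b,F(y)\rangle=0$ for all $y\in Y$, and since $F$ maps $Y$ onto $S^{n-1}$ this yields $\langle b,z\rangle=0$ for all $z\in S^{n-1}$, hence $b=0$. The second and main step is to show that every nontrivial $h=\sum_{i=1}^n b_i f_i$ belongs to $\widehat{C}(D)$. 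Here $h(x)=\langle b,G(x)\rangle$ with $b\neq 0$, so by Cauchy--Schwarz $h(x)\le\|b\|_2\,\|G(x)\|_2$ for every $x\in D$. If $x\notin Y$ then $\|G(x)\|_2<1$, so $h(x)<\|b\|_2$; if $x\in Y$ then $G(x)=F(x)\in S^{n-1}$, so $h(x)\le\|b\|_2$ with equality if and only if $G(x)=b/\|b\|_2$. Since $F=G|_Y$ is a bijection onto $S^{n-1}$, there is a unique $x_0\in Y$ with $F(x_0)=b/\|b\|_2$, and then $h(x_0)=\|b\|_2>0$ while $h(x)<\|b\|_2$ for every $x\in D\setminus\{x_0\}$. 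Hence $h$ attains its maximum exactly once. Taking $V:=\mathrm{span}\{f_1,\dots,f_n\}$ then finishes the proof, since $V\setminus\{0\}\subseteq\widehat{C}(D)$ and $\dim V=n$.

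The step I expect to require the most care is the case analysis in the Cauchy--Schwarz estimate: one has to observe simultaneously that the supremum $\|b\|_2$ is actually \emph{attained} --- at the point $x_0\in Y\subseteq D$, so no compactness of $D$ is needed --- and that it is attained \emph{only} there, the strict inequality off $Y$ coming from $\|G(x)\|_2<1$ and the strict inequality on $Y\setminus\{x_0\}$ coming from injectivity of $F$ together with the equality case of Cauchy--Schwarz. Everything else is a routine transcription of the proof of \thmref{aaa}.
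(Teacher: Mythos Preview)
Your proof is correct and follows essentially the same approach as the paper: both take $V=\mathrm{span}\{\pi_i\circ G\}$ and show that every nonzero $h(x)=\langle b,G(x)\rangle$ attains its maximum uniquely at $F^{-1}(b/\|b\|_2)\in Y$. The only difference is cosmetic --- the paper first analyzes the linear functional on $G(D)\subset\mathbb{R}^n$ using a symmetry-and-scaling argument and then pulls back, whereas you invoke Cauchy--Schwarz directly on $D$, which is a slightly cleaner packaging of the same computation.
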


\begin{proof} The symbol $\pi_i$ stands for the $i$-th projection on $\mathbb{R}^n$.
Note that $S^{n-1}\subset G(D)$ and $G(D)$ is contained in the closed unit
ball of $\mathbb{R}^{n}$. Let us see that any nontrivial linear combination
\[
f:=\sum_{i=1}^{n}b_{i}\pi_{i}\colon G(D)\longrightarrow\mathbb{R}%
\]
attains its maximum at an unique point $x_{0}\in G(D)$ and that this point belongs
to $S^{n-1}.$ Indeed, restricting $\sum_{i=1}^{n}b_{i}\pi_{i}$ to
$S^{n-1}\subset G(D),$ the same argument of the previous section tells us that
there is an unique $x_{0}\in S^{n-1}$ such that%
\begin{equation}
\sum_{i=1}^{n}b_{i}\pi_{i}(x)\leq\sum_{i=1}^{n}b_{i}\pi_{i}(x_{0})\label{ssa1}%
\end{equation}
for every $x\in S^{n-1}$. So, for any $x \in S^{n-1}$, as $- x \in S^{n-1}$, we have
\[
\sum_{i=1}^{n}b_{i}\pi_{i}(-x)\leq\sum_{i=1}^{n}b_{i}\pi_{i}(x_{0})
\]
hence%
\begin{equation}
-\sum_{i=1}^{n}b_{i}\pi_{i}(x)\leq\sum_{i=1}^{n}b_{i}\pi_{i}(x_{0}%
).\label{ssa}%
\end{equation}
From (\ref{ssa1}) and (\ref{ssa}) we obtain
\begin{equation}
\left\vert \sum_{i=1}^{n}b_{i}\pi_{i}(x)\right\vert \leq\sum_{i=1}^{n}b_{i}%
\pi_{i}(x_{0})\label{ssa2}%
\end{equation}
for every $x\in S^{n-1}.$ Now we just need to show that if $y\in G(D)- S^{n-1}$, then
\begin{equation*}
\sum_{i=1}^{n}b_{i}\pi_{i}(y)<\sum_{i=1}^{n}b_{i}\pi_{i}(x_{0}).
\end{equation*}
The case $y=0$ is immediate. Let us suppose $y\neq0$. Recalling that $\left\Vert y\right\Vert
<1$ and applying (\ref{ssa2}), we have
\begin{align*}\sum_{i=1}^{n}b_{i}\pi_{i}(y)&\leq\left\vert \sum_{i=1}^{n}b_{i}\pi
_{i}(y)\right\vert =\left\Vert y\right\Vert \cdot \left\vert \sum_{i=1}^{n}b_{i}%
\pi_{i}\left(\frac{y}{\left\Vert y\right\Vert }\right)\right\vert\\& <\left\vert \sum
_{i=1}^{n}b_{i}\pi_{i}\left(\frac{y}{\left\Vert y\right\Vert }\right)\right\vert \leq
\sum_{i=1}^{n}b_{i}\pi_{i}(x_{0}).
\end{align*}
Thus far we have proved that any nontrivial linear combination 
$$f=\sum_{i=1}^{n}b_{i}\pi_{i}\colon G(D)\longrightarrow\mathbb{R}$$
attains its maximum at an unique point $x_{0}\in G(D)$ and that this point belongs to $S^{n-1}.$ Now we claim that any nontrivial linear combination
\[
h: =\sum_{i=1}^{n}b_{i}\left(  \pi_{i}\circ G\right)  \colon D\longrightarrow\mathbb{R}%
\]
attains its maximum only at the point $F^{-1}\left(  x_{0}\right)  \in S^{n-1}.$ In
fact, note that $h$ attains a maximum at a point $z\in D$ if and only if $f$
attains a maximum at $G(z)$. So, the only point of maximum of $h$ is $G\left(
F^{-1}\left(  x_{0}\right)  \right)  =x_{0}.$ Since $S^{n-1}\subset G(D)$ and the functions $\pi_i $, $i=1, \ldots, n$, are linearly independent on $S^{n-1}$, it follows that the functions $\pi_i \circ G$, $i=1,\ldots,n$, are linearly independent on $D$. The linear subspace spanned by the functions $  \pi_{i}\circ G, i= 1, \ldots, n,$
completes the proof.
\end{proof}

Let us see that Theorem \ref{bbbb} recovers the 2-lineability of $\widehat{C}(\mathbb{R})$. For $a,b \in \mathbb{R}^2$, by $(a,b)$ we mean the open line segment in $\mathbb{R}^2$ from $a$ to $b$. Take $Y = [0, +\infty) \subset \mathbb{R}$, a continuous bijection $F \colon [0, +\infty) \longrightarrow S^1$, a homeomorphism $g \colon (-\infty, 0) \longrightarrow (F(0), (0,0)) \subset \mathbb{R}^2$ such that $\lim\limits_{x \rightarrow 0}g(x) = F(0)$. Define
\[G \colon \mathbb{R} \longrightarrow \mathbb{R}^2~,~G(x) =
\left\{
\begin{array}
[c]{c}%
F(x), ~ x \geq 0,\\
g(x), ~ x < 0.
\end{array}
\right.
\]

\section{Functions on compact subsets of Euclidean spaces} \label{Sec3}

As to the result (C) of the Introduction on $\widehat{C}[a,b]$, in this section we prove that, on the one hand, $\widehat{C}(S^{m-1})\cup \{0\}$ contains an $m$-dimensional subspace of $C(S^{m-1})$; and on the other hand, for every compact subset $K$ of $\mathbb{R}^m$, $\widehat{C}(K)\cup \{0\}$ does not contain an $(m+1)$-dimensional subspace of $C(K)$.

First we have to prove four technical lemmas which will be used in the proof of the main result of this section (Theorem \ref{kkkp}).

\begin{lemma}
\label{lemmafirst} Let $D$ be a metric space such that there is a linear space
$V$ of continuous functions from $D$ to $\mathbb{R}$ such that

\begin{enumerate}
\item $\dim(V)=n$, $n \in \mathbb{N}$,

\item Each $0\neq f\in V$ has an unique point of maximum.
\end{enumerate}
Let $D^{\prime}\subset D$ be the set of points of maximum of the functions belonging to $V$. Then the linear space
\[
V^{\prime}: =\{f|_{D^{\prime}} : f\in V\}
\]
also satisfies (1) and (2).
\end{lemma}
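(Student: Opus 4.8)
The plan is to show that the restriction map $f \mapsto f|_{D'}$ is injective on $V$ (which gives $\dim V' = n$) and that it preserves the "unique point of maximum" property (since each maximum point already lies in $D'$ by construction). First I would verify property (2) for $V'$, which is almost immediate: given $0 \neq f \in V$, its unique point of maximum $x_0 \in D$ lies in $D'$ by the very definition of $D'$ as the set of all maximum points of functions in $V$. Hence for the restriction $f|_{D'}$, the value $f(x_0)$ is still attained (since $x_0 \in D'$) and it is still the strict maximum over $D' \subset D$; so $f|_{D'}$ has $x_0$ as its unique point of maximum, provided $f|_{D'} \neq 0$. This last caveat is exactly what property (1) will handle.

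For property (1), I need the restriction map $R \colon V \to V'$, $R(f) = f|_{D'}$, to be injective; since it is obviously linear and surjective onto $V'$, injectivity yields $\dim V' = \dim V = n$. So suppose $f \in V$ with $f|_{D'} = 0$; I must show $f = 0$. If $f \neq 0$, then $f$ has a unique maximum point $x_0 \in D'$, and $f(x_0) = 0$ because $f$ vanishes on $D'$. Thus $f(x) \leq 0$ for all $x \in D$, with $f(x) < 0$ for $x \neq x_0$. Now I would feed this into another element of $V$: pick any $g \in V$ and look at $g - f$ — wait, a cleaner route is to consider, for small $\varepsilon > 0$, a function that must still have its maximum in $D'$. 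The key observation is: take any $0 \neq g \in V$ with maximum point $y_0 \in D'$. Then $g + f \in V$. If $g + f \neq 0$, it has a unique maximum point $z_0 \in D'$, so $(g+f)(z_0) = g(z_0) + f(z_0) = g(z_0) + 0 = g(z_0)$. On the other hand $(g+f)(y_0) = g(y_0) + f(y_0) = g(y_0) + 0 = g(y_0) = \max g$. Since $z_0$ is the maximum point of $g + f$ we get $g(y_0) = (g+f)(y_0) \leq (g+f)(z_0) = g(z_0) \leq g(y_0)$, forcing equality, so by uniqueness of the maximum of $g$ we have $z_0 = y_0$. Then $(g+f)(x) < (g+f)(y_0) = g(y_0)$ for all $x \neq y_0$, i.e. $g(x) + f(x) < g(y_0)$; but also $g(x) < g(y_0)$ for $x \neq y_0$, which is consistent and gives nothing by itself. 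To get a contradiction I instead pick $g = f$ reconsidered, or better: since $f|_{D'}=0$ and $D'$ contains the maximum point of $-f$ as well (apply the hypothesis to $-f \in V$), let $w_0$ be the unique maximum point of $-f$; then $(-f)(w_0) = 0$ too, so $-f(x) \leq 0$, i.e. $f(x) \geq 0$ for all $x$. Combined with $f(x) \leq 0$ from before, we conclude $f \equiv 0$, the desired contradiction.

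So the argument for (1) actually needs only the elementary fact that $D'$, by definition, contains the maximum points of \emph{all} nonzero members of $V$, in particular of both $f$ and $-f$; this pins $f$ between the zero function from above and below. I expect the only mildly subtle point — and the thing to state carefully — to be that $D'$ is genuinely the full collection of such maximum points (so that it is closed under the operation "take the max point of $-f$"), together with the observation that a strict maximum on $D$ remains a strict maximum when restricted to any subset containing it. There are no topological or metric subtleties here; the hypothesis that $D$ is a metric space is not even needed for this lemma and presumably is carried along only because later lemmas require it. I would therefore keep the write-up short: one paragraph establishing injectivity of the restriction via the $\pm f$ trick, and one line noting that (2) for $V'$ is inherited because each maximum point lies in $D'$.
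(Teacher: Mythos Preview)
Your proposal is correct and, once you abandon the $g+f$ detour, lands on exactly the argument the paper gives: show the restriction map is injective by observing that if $f|_{D'}=0$ then both $f$ and $-f$ have their (unique) maxima in $D'$ with value $0$, forcing $f\le 0$ and $f\ge 0$, hence $f\equiv 0$; property~(2) then follows because the unique maximum point of any nonzero $f\in V$ already lies in $D'$. Your side remarks---that the metric hypothesis is unused here and that one must check $f|_{D'}\neq 0$ is equivalent to $f\neq 0$---are accurate and worth keeping in a clean write-up.
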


\begin{proof}
We first show that the elements of $V^{\prime}$ satisfy (2). In fact, note
that each nonzero function $f|_{D^{\prime}}\in V^{\prime}$ has a point of maximum
which is unique because $f\in V$ has an unique point of maximum. Now we just need to prove that $\dim(V^{\prime})=n$. Let $\left\{
f_{1},\ldots,f_{n}\right\}  $ be a basis of $V$. If $a_1, \ldots, a_n \in \mathbb{R}$ are such that $\sum_{i=1}^{n}a_{i}f_{i}|_{D^{\prime}}=0,$
then $-\sum_{i=1}^{n}a_{i}f_{i}|_{D^{\prime}}=0$. Supposing that $g: =\sum
_{i=1}^{n}a_{i}f_{i}\neq0;$ it follows that $-g\neq0$. Since the points of maximum
of $g$ and $-g$ belong to $D^{\prime},$ the images of these points of
maximum are $0$, since $g|_{D^{\prime}}=-g|_{D^{\prime}}=0.$ We thus conclude
that $g=0$. Since the set $\left\{  f_{1},\ldots,f_{n}\right\}  $ is linearly
independent, we have $a_{1}=\cdots=a_{n}=0$, and hence $\left\{  f_{1}%
|_{D^{\prime}},\ldots,f_{n+1}|_{D^{\prime}}\right\}  $ is linearly
independent as well. The proof is complete because the inequality $
\dim(V^{\prime})\leq\dim V$ is obvious.
\end{proof}

\begin{lemma}
\label{lemma2} Keeping the terminology and the notation of Lemma \ref{lemmafirst} and that of its proof, consider the continuous function
$$F \colon D\longrightarrow\mathbb{R}^{n}~,~F(y)=(f_{1}(y),\ldots,f_{n}(y)).$$
Let $X:=F(D^{\prime})$. Then

\begin{enumerate}
\item For every $v\in S^{n-1},$ the function $$g_{v} \colon X\longrightarrow\mathbb{R}~,~g_{v}(x)= \langle x,v \rangle,$$ has an unique point of maximum.

\item For every $x\in X$ there is $v\in S^{n-1}$ such that $x$ is the unique point of maximum of the function $g_{v}$.

\item Endowing $D^{\prime}$ with the metric induced by the metric of $D$ and $X$ with the Euclidean metric of $\mathbb{R}^{n}$, then
$F|_{D'}\colon D^{\prime}\longrightarrow X$ is a continuous bijection.
\end{enumerate}
\end{lemma}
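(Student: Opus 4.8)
The plan is to push everything through the map $F$, which turns assertions about a nontrivial linear combination $f=\sum_{i=1}^{n}a_{i}f_{i}\in V$ into assertions about the linear functional $x\mapsto\langle x,v\rangle$ on $\mathbb{R}^{n}$ associated with the unit vector $v:=a/\|a\|_{2}\in S^{n-1}$. The key identity, valid for every $y\in D$ and every $a\in\mathbb{R}^{n}$, is $\langle F(y),a\rangle=\sum_{i=1}^{n}a_{i}f_{i}(y)$; hence $g_{v}\circ F|_{D'}$ equals $\|a\|_{2}^{-1}$ times $\bigl(\sum_{i=1}^{n}a_{i}f_{i}\bigr)|_{D'}$. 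Combined with the elementary observation that a function in $V$ whose unique point of maximum over $D$ already lies in $D'$ also attains its maximum over $D'$ at that one point (the maximum over the subset $D'$ cannot exceed the maximum over $D$, and equality pins down the maximizer), every item reduces to the ``one point of maximum'' property built into $V$.

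Concretely, for (1) I would fix $v\in S^{n-1}$, set $h:=\sum_{i=1}^{n}v_{i}f_{i}$, note $h\neq0$ because $\{f_{1},\dots,f_{n}\}$ is a basis of $V$, let $y_{0}\in D$ be its unique maximizer, and observe $y_{0}\in D'$ by the definition of $D'$; then for $x=F(y)\in X$ with $y\in D'$ one has $g_{v}(x)=h(y)\le h(y_{0})=g_{v}(F(y_{0}))$, with equality forcing $y=y_{0}$, so $F(y_{0})$ is the unique point of maximum of $g_{v}$ on $X$. For (2) I would fix $x=F(y)\in X$ with $y\in D'$, use the definition of $D'$ to pick a nonzero $f=\sum_{i=1}^{n}a_{i}f_{i}\in V$ whose unique maximizer is exactly $y$, and take $v:=a/\|a\|_{2}\in S^{n-1}$; the same computation shows that $g_{v}$ has unique point of maximum $F(y)=x$ on $X$.

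For (3), continuity of $F|_{D'}$ is immediate since $F=(f_{1},\dots,f_{n})$ is continuous and $D'$ carries the induced metric, and surjectivity onto $X=F(D')$ holds by definition. For injectivity I would suppose $F(y_{1})=F(y_{2})$ with $y_{1},y_{2}\in D'$; choosing (again via the definition of $D'$) a nonzero $f=\sum_{i=1}^{n}a_{i}f_{i}\in V$ with unique maximizer $y_{1}$, the key identity gives $f(y_{2})=\langle F(y_{2}),a\rangle=\langle F(y_{1}),a\rangle=f(y_{1})=\max_{D}f$, so $y_{2}$ is also a point of maximum of $f$, whence $y_{2}=y_{1}$.

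I do not expect a serious obstacle; the arguments are short once the translation through $F$ is in place. The only thing to watch is consistency in the phrase ``point of maximum'': it must be read throughout as global maximizer over the domain under consideration, so that moving freely between $D$, the subset $D'$, and the image $X$ is legitimate, and in particular so that the restriction to $D'$ of a function in $V$ genuinely inherits both the existence and the uniqueness of its maximizer. The continuity of $F$ and the ``maximum over a subset'' bookkeeping are the only routine verifications.
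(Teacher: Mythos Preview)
Your proposal is correct and follows essentially the same route as the paper: both proofs hinge on the identity $\langle F(y),a\rangle=\sum_{i}a_{i}f_{i}(y)$ to translate between the linear functional $g_{v}$ on $X$ and elements of $V$, and then exploit the unique-maximum property of $V$. The only cosmetic difference is that the paper invokes Lemma~\ref{lemmafirst} (unique maxima for the restricted space $V'$ on $D'$) in part~(1), whereas you work directly with $h\in V$ on all of $D$ and use that its unique maximizer automatically lies in $D'$; your formulation is slightly more direct but the content is the same.
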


\begin{proof}
(1) Given $v=(a_{1},\ldots,a_{n})\in S^{n-1}$, consider the function $g_{v}\circ F\colon D^{\prime
}\longrightarrow\mathbb{R}$. For every $y \in D'$,
\[
g_{v}\circ F(y)= \langle F(y),v \rangle=\sum_{i=1}^{n}a_{i}f_{i}|_{D^{\prime}}(y)\in
V^{\prime}.
\]
From Lemma \ref{lemmafirst}, $g_{v}\circ F \colon D^{\prime}\longrightarrow\mathbb{R}$
has an unique maximum $d\in D^{\prime}$. Therefore, $ \langle F(y),v \rangle \leq \langle F(d),v \rangle$ for every $y\in D^{\prime
}$,
and the equality holds only when $y=d$. Then, $\langle x,v \rangle \leq \langle F(d),v \rangle$  for every $x\in X$. So $F(d)$ is a point of maximum of $g_{v}.$ Assume that $g_{v}$ has another point of
maximum $x^{\prime}$ in $X$. In this case, $x^{\prime}=F(d^{\prime})$ for
some $d^{\prime}\neq d$ and the function $g_{v}\circ F \colon D^{\prime}%
\longrightarrow\mathbb{R}$ has two points of maximum. But this contradicts what we have just proved.

\medskip

\noindent (2) For each $x\in X$ there is
$d\in D^{\prime}$ so that $x=F(d)$. But $d$ is the point of maximum
of some nonnull function $\sum_{i=1}^{n}a_{i}f_{i}|_{D^{\prime}}\colon D^{\prime
}\longrightarrow\mathbb{R}$. With no loss of generality, we may suppose
$\|(a_{1},\ldots,a_{n})\|_2=1$. Calling $v:=(a_{1},\ldots,a_{n})\in S^{n-1}$,  we have,
\[
\sum_{i=1}^{n}a_{i}f_{i}|_{D^{\prime}}(y)= \langle F(y),v \rangle \leq \langle F(d),v \rangle = \langle x,v \rangle,
\] for every $y\in D^{\prime}.$ Then $\langle z, v \rangle \leq \langle x, v \rangle$ for every $z\in X$. From (1) it follows that $x$ is the unique point of maximum of
$g_{v}$.

\medskip

\noindent (3) As each coordinate function $f_{i}|_{D^{\prime
}}$ is continuous and $X=F(D^{\prime})$, we just need to
prove that $F$ is injective. If $d_{1}^{\prime}\neq d_{2}^{\prime
}\in D^{\prime}$ are such that $F(d_{1}^{\prime})=F(d_{2}^{\prime})$ then,
\[
g_{v}\circ F(d_{1}^{\prime})= \langle v,F(d_{1}^{\prime}) \rangle = \langle v,F(d_{2}^{\prime}) \rangle = g_{v}\circ F(d_{2}^{\prime}).
\]
for every $v\in\mathbb{R}^{n}$. Therefore neither $d_{1}^{\prime}$ nor $d_{2}^{\prime}$ can be unique points of
maximum of $g_{v}\circ F\in V^{\prime}$, for any $v$. The proof is complete because this contradicts the
definition of $D^{\prime}$ and Lemma \ref{lemmafirst}.
\end{proof}

\begin{definition}\rm
\label{definitionX} Let $n\geq2$ and let $X$ be a subset of $\mathbb{R}^{n}$ containing more than one point satisfying the following conditions:

\begin{enumerate}
\item For every $v\in S^{n-1}$, the function $$g_{v} \colon X\longrightarrow\mathbb{R}~,~ g_{v}(x)= \langle x,v \rangle,$$
has an unique point of maximum denoted by $x_{v}$.

\item For every $x\in X$ there is $v_{x}\in S^{n-1}$
such that $x$ is the unique point of maximum of the function
$g_{v_{x}}$.
\end{enumerate}
Define the function
$$f\colon S^{n-1}\longrightarrow X~,~f(v)=x_{v}, $$
where $x_{v}$ is described in (1). From (2) it follows that
$f$ is surjective.
\end{definition}

\begin{lemma}
\label{lemmacompact} Let $X$ and $f$ be as in Definition \ref{definitionX}
and let $K$ be a compact subset of $\mathbb{R}^{n}$ so that $K\subset X$. Then
$f^{-1}(K)\subset S^{n-1}$ is a compact subset of $\mathbb{R}^{n}$.
\end{lemma}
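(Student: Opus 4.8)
The plan is to show that $f^{-1}(K)$ is a closed and bounded subset of $\mathbb{R}^{n}$. Boundedness is immediate, since $f^{-1}(K)\subseteq S^{n-1}$ lies in the closed unit ball. The real content is closedness, and here one cannot simply invoke continuity of $f$: the map $f$ has not been shown to be continuous (and in general need not be). Instead I would argue sequentially, using the compactness of $K$ to extract convergent subsequences and the uniqueness clause of Definition~\ref{definitionX}(1) to identify their limits. It suffices to treat the nontrivial case $K\neq\emptyset$.

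Concretely, let $(v_{k})_{k}$ be a sequence in $f^{-1}(K)$ converging to some $v\in\mathbb{R}^{n}$. Since $S^{n-1}$ is closed, $v\in S^{n-1}$, so $x_{v}$ is defined. For each $k$, the point $x_{v_{k}}=f(v_{k})$ lies in the compact set $K$; passing to a subsequence, we may assume $x_{v_{k}}\to x$ for some $x\in K$. By the defining property of $x_{v_{k}}$ as the point of maximum of $g_{v_{k}}$ on $X$,
\[
\langle x_{v_{k}},v_{k}\rangle \geq \langle z,v_{k}\rangle \quad\text{for every } z\in X.
\]
Fixing $z\in X$ and letting $k\to\infty$ (the inner product is continuous, $v_{k}\to v$ and $x_{v_{k}}\to x$), we get $\langle x,v\rangle\geq\langle z,v\rangle$ for every $z\in X$; that is, $x$ is a point of maximum of $g_{v}$ on $X$. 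Since $K\subseteq X$, we have $x\in X$, and by Definition~\ref{definitionX}(1) the point of maximum of $g_{v}$ on $X$ is unique, so $x=x_{v}=f(v)$. Hence $f(v)=x\in K$, i.e. $v\in f^{-1}(K)$, which proves that $f^{-1}(K)$ is closed in $\mathbb{R}^{n}$.

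Being closed and bounded in $\mathbb{R}^{n}$, the set $f^{-1}(K)$ is compact, as claimed. The only delicate point is the one flagged above: because $f$ is not known to be continuous, the closedness of $f^{-1}(K)$ cannot be read off from any topological property of $f$, and must instead be squeezed out of the extremal characterization of the $x_{v_{k}}$ together with the compactness of $K$ (which provides the convergent subsequence) and the uniqueness of maximizers (which pins the limit down to $x_{v}$). Everything else in the argument is the routine passage to the limit in the maximality inequality.
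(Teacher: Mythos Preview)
Your argument is correct and follows essentially the same route as the paper: extract, via compactness of $K$, a convergent subsequence of the images $f(v_k)$, pass to the limit in the maximality inequality $\langle f(v_k),v_k\rangle\geq\langle z,v_k\rangle$, and use the uniqueness clause in Definition~\ref{definitionX}(1) to conclude $f(v)=x\in K$. The only cosmetic difference is that the paper phrases this as sequential compactness (starting from an arbitrary sequence in $f^{-1}(K)$ and using compactness of $S^{n-1}$ to get a convergent subsequence), whereas you phrase it as closedness plus boundedness; the substance is identical.
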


\begin{proof}
Let $(v_{n})_{n=1}^\infty $ be a sequence in $f^{-1}(K)$. As $f^{-1}(K)\subset S^{n-1}$ and $S^{n-1}$ is compact, there is a
convergent subsequence $v_{n_{j}}\longrightarrow v\in S^{n-1}$.
For every $j$, let $x_{j}=f(v_{n_{j}})\in K$. Since $K$ is compact, there is a
convergent subsequence $x_{j_{k}}\longrightarrow x\in K.$ Now, since
$x_{j_{k}}$ is the unique point of maximum of the function $ g_{v_{n_{j_{k}}}}$ in $X,$ we have $\langle f(v),v_{n_{j_{k}}} \rangle \leq \langle x_{j_{k}},v_{n_{j_{k}}}\rangle$.  Making $k\longrightarrow\infty$ we get
$ \langle f(v),v \rangle \leq \langle x,v \rangle.$
But $f(v)$ is the unique point of maximum of $g_{v}$, then
$f(v)= x$ and $v\in f^{-1}(K)$. This proves that $f^{-1}(K) \subset S^{n-1}$ is closed, hence compact.
\end{proof}

\begin{lemma}
\label{lemmacontinuous} The function $f$ defined in Definition \ref{definitionX} is continuous if and only if $X$ is compact.
\end{lemma}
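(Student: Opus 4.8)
The plan is to prove the two implications separately. For the easy direction, suppose $X$ is compact. Since $S^{n-1}$ is also compact, it suffices to show that $f$ has closed graph in $S^{n-1}\times X$: a function from a compact space into a compact (hence, in particular, sequentially compact metric) space with closed graph is continuous. So let $v_k\to v$ in $S^{n-1}$ and $f(v_k)\to x$ in $X$ (using compactness of $X$ to make sense of the limit living in $X$, after passing to a subsequence if needed). For each $k$ and each $z\in X$ we have $\langle f(v_k),v_k\rangle\geq\langle z,v_k\rangle$ because $f(v_k)=x_{v_k}$ is the point of maximum of $g_{v_k}$; letting $k\to\infty$ gives $\langle x,v\rangle\geq\langle z,v\rangle$ for all $z\in X$, so $x$ is a point of maximum of $g_v$, and by uniqueness (condition (1) of Definition~\ref{definitionX}) $x=x_v=f(v)$. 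Hence the graph is closed and $f$ is continuous. This argument is essentially the one already carried out in the proof of Lemma~\ref{lemmacompact}, now applied with $K=X$.

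For the converse, suppose $f$ is continuous. Then $X=f(S^{n-1})$ is the continuous image of the compact set $S^{n-1}$, hence $X$ is compact. This direction is immediate once one notices that condition (2) in Definition~\ref{definitionX} makes $f$ surjective onto $X$, which is recorded in the definition itself.

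The only genuinely delicate point is the first direction, and within it the step of producing, from an arbitrary sequence $(v_k)$ in $S^{n-1}$, a limit point $x$ of $(f(v_k))$ that actually lies in $X$; this is exactly where compactness of $X$ is used, and without it the argument collapses — indeed the lemma asserts the equivalence, so continuity really can fail when $X$ is not compact. I would therefore present the forward implication carefully via the closed-graph/sequential argument above and dispose of the converse in one line. One should also remark that $X$ is a metric space (as a subset of $\mathbb{R}^n$), so sequential continuity is the same as continuity and the closed-graph criterion in the stated form is legitimate.
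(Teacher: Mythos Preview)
Your proof is correct, and both directions match the paper's in substance. For the implication ``$X$ compact $\Rightarrow$ $f$ continuous'' the paper simply invokes Lemma~\ref{lemmacompact}: a closed subset $B$ of the compact set $X$ is compact, so $f^{-1}(B)$ is compact and hence closed in $S^{n-1}$; your closed-graph/sequential argument is exactly the same limiting computation unpacked (as you yourself note), just organized around a different continuity criterion. One cosmetic point: you label the ``$X$ compact $\Rightarrow$ $f$ continuous'' direction as the ``easy'' one and then later call it the ``genuinely delicate'' one --- pick one.
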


\begin{proof}
Recall that in $X$ and $S^{n-1}$ we are considering the Euclidean metric of $\mathbb{R}^{n}$. Suppose that $f \colon S^{n-1}\longrightarrow X$ is continuous. Since $f$ is surjective and
$S^{n-1}$ is compact, it follows that $X=f(S^{n-1})$ is compact. Conversely,  suppose that $X$ is compact. Let $B$ be a closed subset of $X$. Since $X$ is a
compact metric space, we conclude that $B$ is also compact in $\mathbb{R}^{n}%
$. From Lemma \ref{lemmacompact} we know that $f^{-1}(B)$ is a compact subset of $S^{n-1}$, therefore closed.
\end{proof}

\begin{theorem}
\label{kkkp}Let $n\geq2$ and $m\geq1$ be positive integers. Then $m<n$ if, and
only if, for every compact set $K\subset\mathbb{R}^{m}$, there is no $n$-dimensional subspace of $C(K)$ contained in $\widehat{C}(K) \cup \{0\}$. \end{theorem}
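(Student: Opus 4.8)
The plan is to establish the two implications of this biconditional separately, relying on the machinery of Lemmas \ref{lemmafirst}--\ref{lemmacontinuous}.

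\emph{The direction $m < n \Rightarrow$ no $n$-dimensional subspace.} I would argue by contradiction. Suppose $K \subset \mathbb{R}^m$ is compact and $V \subset C(K)$ is an $n$-dimensional subspace with every nonzero element attaining its maximum at exactly one point. Apply Lemma \ref{lemmafirst} with $D = K$ to pass to $D' \subset K$, the set of points of maximum, obtaining an $n$-dimensional space $V'$ of functions on $D'$ still satisfying (1) and (2). Fix a basis $f_1,\ldots,f_n$ of $V$ and form $F\colon K \to \mathbb{R}^n$ as in Lemma \ref{lemma2}, and set $X = F(D')$. By Lemma \ref{lemma2}, $X$ satisfies conditions (1) and (2) of Definition \ref{definitionX} (note $X$ has more than one point since $\dim V' = n \geq 2$), and $F|_{D'}\colon D' \to X$ is a continuous bijection. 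Now the key point: $D'$ is a closed subset of the compact set $K$, hence compact, so $X = F(D')$ is compact; therefore by Lemma \ref{lemmacontinuous} the surjection $f\colon S^{n-1} \to X$, $v \mapsto x_v$, is continuous. Composing, $g := (F|_{D'})^{-1} \circ f \colon S^{n-1} \to D' \subset K \subset \mathbb{R}^m$ is a continuous map. The final step is to derive a contradiction from the existence of such a map together with the unique-maximum property. The natural tool, as the introduction advertises, is the Borsuk--Ulam theorem: I expect that the assignment $v \mapsto x_v$ must send antipodal points of $S^{n-1}$ to distinct points of $X$ (if $x_v = x_{-v}$ then $x$ is the unique maximizer of both $g_v$ and $g_{-v} = -g_v$ on $X$, forcing $X = \{x\}$, contradicting that $X$ has more than one point); so $v \mapsto x_v$ is an antipode-free continuous map $S^{n-1} \to \mathbb{R}^m$ with $m < n$, i.e.\ $m \leq n-1$, which contradicts Borsuk--Ulam. \emph{This Borsuk--Ulam step is the main obstacle}: one must verify carefully that the composed map is genuinely continuous on all of $S^{n-1}$ and that antipodal points are never identified, so that the hypotheses of Borsuk--Ulam are met.

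\emph{The direction $m \geq n \Rightarrow$ there exists a compact $K \subset \mathbb{R}^m$ with an $n$-dimensional subspace.} Here I would exhibit $K$ explicitly. Take $K = S^{n-1} \subset \mathbb{R}^n \subset \mathbb{R}^m$ (embedding $\mathbb{R}^n$ as the first $n$ coordinates). Since $S^{n-1}$ is (trivially) the continuous bijective image of itself, Theorem \ref{aaa} applies and shows $\widehat{C}(S^{n-1})$ contains, up to the origin, an $n$-dimensional subspace of $C(S^{n-1})$ --- spanned by the coordinate projections $\pi_1,\ldots,\pi_n$ restricted to $S^{n-1}$. This $K$ is compact and sits inside $\mathbb{R}^m$, so it witnesses the failure of the ``no $n$-dimensional subspace'' conclusion. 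Contrapositively, if for \emph{every} compact $K \subset \mathbb{R}^m$ there is no such subspace, then $m < n$.

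Putting the two implications together yields the biconditional. The only real content is in the first direction; the second is an immediate application of Theorem \ref{aaa} to the sphere $S^{n-1}$ viewed as a compact subset of $\mathbb{R}^m$.
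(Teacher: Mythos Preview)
Your outline matches the paper's approach almost exactly: same reduction via Lemmas \ref{lemmafirst}--\ref{lemmacontinuous}, same map $F^{-1}\circ f\colon S^{n-1}\to D'\subset\mathbb{R}^m$, same Borsuk--Ulam contradiction, and the same example $K=S^{n-1}$ for the converse via Theorem \ref{aaa}.

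However, you have misplaced the real obstacle. You write ``$D'$ is a closed subset of the compact set $K$, hence compact'' as though it were immediate, and then flag the Borsuk--Ulam step as the main difficulty. It is the other way around. Nothing in Lemmas \ref{lemmafirst}--\ref{lemma2} tells you $D'$ is closed: $D'$ is by definition the set of argmax points of the nonzero elements of $V$, and there is no a priori reason this set should be closed. The paper spends a full paragraph on this, taking a sequence $(d_k)$ in $D'$ with $d_k\to d\in K$, choosing $v_k\in S^{n-1}$ with $F(d_k)=f(v_k)$, passing to a subsequence $v_{k_j}\to v$, and then using the inequalities $\langle F(d_{k_j}),v_{k_j}\rangle\ge\langle F(z),v_{k_j}\rangle$ for all $z\in D$ (not just $z\in D'$) together with continuity of $F$ on all of $D$ to conclude $\langle F(d),v\rangle\ge\langle F(z),v\rangle$ for all $z\in D$, so that $d\in D'$. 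Note this argument needs $F$ defined and continuous on $K$, not just on $D'$. Once $D'$ is known to be compact, everything else you need is automatic: $X=F(D')$ is compact, so $f$ is continuous by Lemma \ref{lemmacontinuous}; $F|_{D'}$ is a continuous bijection from a compact space to a Hausdorff space, hence a homeomorphism, so $(F|_{D'})^{-1}$ is continuous; and your antipodal analysis for Borsuk--Ulam is correct and essentially identical to the paper's (they phrase it as ``$g_v$ is constant'' rather than ``$X=\{x\}$'', but it is the same observation). So the proposal is sound in structure but incomplete: the sentence you treated as a throwaway is precisely the step that carries the technical weight.
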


\begin{proof} Assume that $m < n$ and suppose that there exist a compact $K\subset\mathbb{R}^{m}$ and a set
$V\subset\widehat{C}(K)$ so that $V\cup\{0\}$ is an $n$-dimensional linear subspace of $C(K)$. The compact $K$ shall play the role of the metric space $D$ in the lemmas above. Let $f_{1},\ldots,f_{n}$ be a basis of $V\cup
\{0\}$ and define $F \colon D\longrightarrow\mathbb{R}^{n}$ as in Lemma \ref{lemma2}. As before, let $D^{\prime}\subset D$ be the set of points of maximum of the functions belonging to $V$ and let $X:=F(D^{\prime})$. Since $\dim(V\cup\{0\})\geq2$, $V$ contains non-constant functions.
Letting $g\in V$ be a non-constant function; it is clear that $g$ and $-g$ have different points of maximum, so $D'$ has more than one point. By Lemma \ref{lemma2} (3), the map $F|_{D'} \colon D^{\prime}\longrightarrow X$ is bijective, thus $X$ has more than one point as well. Therefore $X$ satisfies the conditions of Definition \ref{definitionX}. Consider also the function $f \colon S^{n-1}\longrightarrow X$ from Definition
\ref{definitionX}. Let us prove that $D^{\prime}$ is closed in $D$, and
therefore compact in $\mathbb{R}^{m}$. Let $(d_{k})_{k=1}^\infty$ be a sequence in $D^{\prime}$ converging to $d$. From the compactness of $D$ we have $d\in D$. From Definition \ref{definitionX} we know that, for each $k$,  $F(d_{k})=f(v_{k})$ for some
$v_{k}\in S^{n-1}$, which
means that $F(d_{k})$ is the unique point of maximum in $X$ of the function
\[g_{v_k} \colon X \longrightarrow \mathbb{R}~,~
g_{v_{k}}(x)=\langle x,v_{k}\rangle.
\]
By the compactness of $S^{n-1}$ there is a convergent
subsequence $v_{k_{j}}\longrightarrow v\in S^{n-1}$. Note that
\[
\langle F(d_{k_{j}}),v_{k_{j}}\rangle \geq  \langle F(y),v_{k_{j}} \rangle
{\rm ~for~ every~} y\in D^{\prime}.\]
Since these points $y\in D^{\prime}$ are points of
maximum of the functions $g_{v}\circ F \colon D\longrightarrow\mathbb{R}$, the
inequality
\[
\langle F(d_{k_{j}}),v_{k_{j}}\rangle \geq \langle F(z),v_{k_{j}} \rangle
\]
holds for every $z\in D$. Using the continuity of $F \colon D\longrightarrow\mathbb{R}^{n}$, we get
\[\langle F(d),v \rangle =
\lim_{j\rightarrow\infty} \langle F(d_{k_{j}}),v_{k_{j}}\rangle \geq \lim_{j\rightarrow\infty} \langle F(z),v_{k_{j}} \rangle = \langle F(z), v \rangle,
\]
for every $z\in D$. So $d$ is a point of maximum of the function $g_{v}\circ F \colon D\longrightarrow
\mathbb{R}$, and thus $d\in D^{\prime}$. This completes the proof that $D^{\prime}$ is closed in $D$.

By Lemma \ref{lemma2}(3) we know that the function $F \colon D^{\prime}\longrightarrow X$ is a continuous bijection from the compact $D'$ to the Haussdorf space $X$, therefore it is a homeomorphism. As $D^{\prime}$ is compact, $X=F(D^{\prime})$ is compact as well, and by
Lemma \ref{lemmacontinuous} the function $f \colon S^{n-1}\longrightarrow X$ is continuous. Considering $\mathbb{R}^{m}$ conveniently embedded in $\mathbb{R}^{n-1}$ (remember that $m < n$ by assumption), the function
\[F^{-1}\circ f \colon S^{n-1}\longrightarrow D' \subset \mathbb{R}^m \subset \mathbb{R}^{n-1},
\]
is continuous.

By the Borsuk-Ulam theorem (see, e.g. \cite{Dugundji}) there is a pair of antipodal points $v,-v\in
S^{n-1}$ such that $F^{-1}\circ f(v)=F^{-1}\circ f(-v)$. The injectivity of $F^{-1}$ gives $f(v)=f(-v)=: x$. Using the definition of $f \colon S^{n-1}\longrightarrow X$ we conclude that both
$g_{v} \colon X\longrightarrow\mathbb{R}$ and $g_{-v} \colon X\longrightarrow\mathbb{R}$ have
$x$ as a maximum. On the other hand, $g_{-v}=-g_{v}$, so $g_{v}%
\colon X\longrightarrow\mathbb{R}$ is constant. This is a contradiction because $X$ has more than one point and $g_{v}$ attains its maximum only once in $X$. For the converse just make $D = S^{n-1}$ in Theorem \ref{aaa}.
\end{proof}

\section{An infinite dimensional example}
Let $K$ be a compact subset of $\mathbb{R}^m$. In Section \ref{Sec3} we saw that, for $m < n$, there is no $n$-dimensional subspace of $C(K)$ formed, up to the origin, by functions that attain the maximum only at one point. In this section we show that if we allow $K$ to be a compact subset of an infinite dimensional Banach space, $\widehat{C}(K)$ may contain, up to the origin, an infinite dimensional subspace of $C(K)$.

\begin{example}\rm Let $K$ be the following subset of $\ell_2$:
$$K = \left\{\left(\frac{a_n}{n}\right)_{n=1}^\infty : (a_n)_{n=1}^\infty \in \ell_2 {\rm ~and~} \|(a_n)_{n=1}^\infty\|_2 \leq 1 \right\}. $$
It is clear that $K$ is a subset of the Hilbert cube $ \prod_{n=1}^\infty \left[-\frac{1}{n}, \frac{1}{n} \right]$. Since the Hilbert cube is compact, to prove that $K$ is compact it is enough to show that it is closed. Let $( v_j)_{j=1}^\infty = \left(\left(\frac{v_n^j}{n} \right)_{n=1}^\infty \right)_{j=1}^\infty$ be a sequence in $K$ converging to $w = (w_n)_{n=1}^\infty \in \ell_2$. Since convergence in $\ell_2$ implies coordinatewise convergence, $w_n =\lim\limits_j \frac{v_n^j}{n}$, so $nw_n = \lim\limits_j v_n^j$, for every fixed $n$. For every $k$,
$$\sum_{n=1}^k n^2|w_n|^2 = \sum_{n=1}^k \lim_j|v_n^j|^2 = \lim_j \sum_{n=1}^k |v_n^j|^2 \leq \limsup_j \|(v^j_n)_{n=1}^\infty \|_2^2\leq1.$$
This shows that $\|(nw_n)_{n=1}^\infty \|_2 \leq 1$, proving that $w \in K$. So $K$ is a compact subset of $\ell_2$.

Now we proceed to show that $\widehat{C}(K) \cup \{0\}$ contains an infinite dimensional subspace of $C(K)$. Consider the function
$$F \colon K \longrightarrow \ell_2~,~F\left( \left(\frac{a_n}{n}\right)_{n=1}^\infty \right)= \left( a_n\right)_{n=1}^\infty. $$
By $\pi_j \colon \ell_2 \longrightarrow \mathbb{R}$ we mean the projection onto the $j$-th coordinate, $j \in \mathbb{N}$. For each $j$, the function
$$\pi_j \circ F \colon K \longrightarrow \mathbb{R} $$
is continuous because $\pi_j \circ F = j \cdot \pi_j$. It is clear that the functions $\pi_j \circ F, j \in \mathbb{N}$, are linearly independent. Let $f:=\sum_{j=1}^k b_j(\pi_j \circ F)$ be a nontrivial linear combination of these continuous functions. Writing $b = (b_1, \ldots, b_k,0,0, \ldots) \in \ell_2$ we have $f(x) = \langle b, F(x)\rangle$ for every $x \in K$. As $b \in \ell_2^k$ and $\|F(x)\|_2 \leq 1$ for every $x \in K$,
$$f(x) = \langle b, F(x) \rangle < \left\langle b, \frac{b}{\|b\|_2}\right\rangle $$
whenever $F(x) \neq \frac{b}{\|b\|_2}$. As $F$ is a bijection onto the closed unit ball of $\ell_2$, there is a unique $y \in K$ such that $F(y) = \frac{b}{\|b\|_2}$. This shows that $f$ attains its maximum at $y$. An adaptation of the argument used in the proof of Theorem \ref{bbbb} guarantees that this maximum is unique.
\end{example}

\section*{Appendix}

The result below was communicated by V. I. Gurariy during a Non-linear Analysis Seminar at Kent State University (Kent, Ohio, USA) in the Fall of 2004. The alert reader may hear the echo of Banach's proof of the fact that the set of continuous nowhere differentiable functions is dense and the set of continuous functions differentiable at least at one point is meager.

\medskip

\noindent {\bf Proposition A.} (V.I. Gurariy, 2004) {\it The set $\widehat{\mathcal{C}}[0,1]$ is a $G_{\delta}$ dense subset of $\mathcal{C}[0,1]$.}

\medskip

\noindent{\it Sketch of the proof.}
For each $n \in \mathbb{N}$, let
$$U_n = \left\{ f \in \mathcal C[0,1] : \ {\rm for\ some\ }x \in [0,1], \ f(x) > \max_{|t-x| \geq \frac{1}{n}} f(t) \right\}.$$
Notice that $U_n$ is open. Indeed, if $f \in U_n$ and $g \in \mathcal C[0,1]$ is ``{\em very close}'' to $f$, then
$$g(x) > \max_{|t-x| \geq \frac{1}{n}} g(t).$$
Also, if $h \in \mathcal C[0,1]$ is arbitrary and $h(x_0) = \max_{t \in [0,1]} h(t)$, then (by slightly increasing $h$ near $x_0$) we shall get a function $k \in \mathcal C[0,1]$ such that $h \approx k$ and $k \in U_n.$ In other words, each $U_n$ is dense in $\mathcal C[0,1].$ Furthermore, $\mathcal M = \bigcap_{n=1}^\infty U_n.$ Indeed, suppose that $f \in U_n$ for each $n$ and that $f(x_0) = f(x_1) = \max_{[0,1]} f$ for some $0 \leq x_0 < x_1 \leq 1.$ For large enough $n$, this means that
$$f(x_0) = \max_{[0,1]} f > \max_{|t-x| \geq \frac{1}{n}} f(t) \geq f(x_1) = \max_{[0,1]} f,$$ which is a contradiction. Therefore, $\bigcap_n U_n \subset \mathcal{M}.$ The converse inclusion is easy.

\end{document}